\newtheorem*{main*}{Main Theorem}
\newtheorem{theorem}{Theorem}[section]
\newtheorem*{theorem*}{Theorem}
\newtheorem{lemma}[theorem]{Lemma}
\newtheorem*{corollary}{Corollary}
\newtheorem*{question*}{Question}
\newtheorem*{conjecture*}{Conjecture}
\theoremstyle{definition}
\newtheorem*{definition*}{Definition}
\theoremstyle{remark}
\newtheorem{remark}[theorem]{Remark}
\numberwithin{equation}{section}
\newcommand{\R}{\mathbb{R}}
\newcommand{\Z}{\mathbb{Z}}
\newcommand{\mc}{\mathcal}
\newcommand{\Ga}{\Gamma}
\newcommand{\La}{\Lambda}
\DeclareMathOperator{\cdim}{cdim}
\DeclareMathOperator{\Aff}{Aff}
\DeclareMathOperator{\GL}{GL}
\DeclareMathOperator{\Sp}{\Sp}
\newcommand{\til}{\widetilde}
\providecommand{\to}{\longrightarrow }
\newcommand{\norm}[1]{\left\| #1 \right\| }
		\renewcommand{\bf}{\bfseries}
		\renewcommand{\it}{\itshape}
		\renewcommand{\hat}{\widehat}
		\renewcommand{\bar}{\overline}
\begin{document}

\author[Michelle Bucher]{Michelle Bucher}
\author[Chris Connell]{Chris Connell$^\dagger$}
\thanks{$\dagger$ This work is supported by the Simons Foundation, under grant \#210442}
\author[Jean-Fran\c{c}ois Lafont]{Jean-Fran\c{c}ois Lafont$^\ddagger$}
\thanks{$\ddagger$ This work is supported by the NSF, under grant DMS-1510640.}

\title{Vanishing simplicial volume for certain affine manifolds}

\address{Universit\'e de Gen\`eve}
\email{Michelle.Bucher-Karlsson@unige.ch}

\address{Indiana University} 
\email{cconnell@indiana.edu} 

\address{Ohio State University} 
\email{jlafont@math.ohio-state.edu} 

\begin{abstract}
We show that closed aspherical manifolds supporting an affine structure, whose holonomy map is injective and 
contains a pure translation, must have vanishing
simplicial volume. This provides some further evidence for the veracity of the Auslander Conjecture. 
Along the way, we provide a simple cohomological criterion for aspherical manifolds with normal 
amenable subgroups of $\pi_1$ to have vanishing simplicial volume. This answers a special case of a question
due to L\"uck. 

\end{abstract}

\maketitle


\thispagestyle{empty} 

\section[]{Introduction}

The topology of affine manifolds remains quite poorly understood. In this short note, we consider the simplicial volume 
of affine manifolds. We show:

\begin{main*}\label{thm: affine simplicial volume} Let $M$ be a closed aspherical manifold. Suppose that $M$ admits an 
affine structure for which the holonomy representation $\rho:\pi_1(M)\rightarrow \mathrm{Aff}(\mathbb{R}^n) = \R^n \rtimes GL_n(\mathbb R)$ 
is injective, and has non-trivial translational subgroup $\rho(\pi_1(M))\cap \mathbb R^n$. 
Then the simplicial volume of $M$ vanishes.
\end{main*}

Recall that, for a closed oriented manifold $M$, the simplicial volume $||M||$ is a topological invariant which measures 
how efficiently the fundamental class of $M$ can be represented as a real singular chain. This non-negative real valued
invariant was introduced by Gromov \cite{Gr82} and Thurston \cite[Chapter 6]{thurston}. 

A smooth manifold $M$ supports an affine structure if one can choose charts for $M$ so that all transition maps are affine maps.
If $M$ has an affine structure, then there is an associated {\it holonomy
representation} $\rho: \pi_1(M) \rightarrow \mathrm{Aff}(\mathbb{R}^n)$, and a $\rho$-equivariant
{\it developing map} $D: \tilde M \rightarrow \mathbb R^n$ (unique up to affine transformations). If the developing map $D$ is a 
homeomorphism, then the affine structure is called {\it complete}. 

If $M$ has a complete affine structure, then it 
follows that the holonomy representation is injective, and that $M$ is aspherical. 
Thus from our {\bf Main Theorem} we obtain (see also the discussion in Section \ref{complete-case}):

\begin{corollary}
If $M$ is a closed manifold with a complete affine structure, and the holonomy contains a pure translation, then $||M|| = 0$.
\end{corollary}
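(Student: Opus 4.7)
The plan is to deduce the Corollary directly from the Main Theorem by checking that the completeness hypothesis automatically supplies the two structural hypotheses of the Main Theorem---namely, that $M$ is aspherical and that the holonomy $\rho$ is injective. Once these are in hand, the assumption that the holonomy contains a pure translation is simply the assertion that $\rho(\pi_1(M))\cap \mathbb{R}^n$ is non-trivial, which is the remaining hypothesis of the Main Theorem.

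First, I would verify asphericity. Since the affine structure is complete, the developing map $D:\tilde{M}\to\mathbb{R}^n$ is a homeomorphism by definition. As $\mathbb{R}^n$ is contractible, so is $\tilde{M}$, and hence $M=\tilde{M}/\pi_1(M)$ is a $K(\pi_1(M),1)$.

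Next, I would verify injectivity of $\rho$. The developing map $D$ is $\rho$-equivariant, so it conjugates the free, properly discontinuous deck transformation action of $\pi_1(M)$ on $\tilde{M}$ to the action of $\rho(\pi_1(M))$ on $\mathbb{R}^n$. Any $\gamma\in\ker\rho$ then acts trivially on $\mathbb{R}^n$ via this conjugation, hence trivially on $\tilde{M}$, and since the deck action is free this forces $\gamma=e$. Thus $\rho$ is injective.

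Having verified both hypotheses, the Main Theorem applies and yields $\|M\|=0$. There is no real obstacle here: the Corollary is essentially a direct repackaging of the Main Theorem under the stronger assumption of completeness. The only nuance worth flagging is a notational one, that the phrases ``$\rho(\pi_1(M))\cap\mathbb{R}^n$ is non-trivial'' and ``the holonomy contains a pure translation'' describe the same set---elements of $\mathrm{Aff}(\mathbb{R}^n)$ whose linear part is the identity---so no additional argument is needed to pass between them.
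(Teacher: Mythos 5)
Your proposal is correct and follows the same route the paper takes: the paper likewise deduces the Corollary from the Main Theorem by noting that completeness of the affine structure forces $\tilde{M}\cong\mathbb{R}^n$ (hence asphericity) and injectivity of the holonomy via the $\rho$-equivariance of the homeomorphic developing map. Your write-up simply makes explicit the details the paper leaves as assertions.
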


On the other hand, there exist examples of complete affine manifolds whose (necessarily injective) holonomy representation
contains no pure translations -- and hence are not covered by our {\bf Main Theorem}. 

We have established a special case of the following natural:

\begin{conjecture*}
If $M$ is a closed manifold supporting an affine structure, then $||M||=0$.
\end{conjecture*}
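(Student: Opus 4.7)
The plan is to reduce the general conjecture to the \textbf{Main Theorem} (together with its cohomological variant for aspherical manifolds having a normal amenable subgroup of $\pi_1$) via a case analysis on the holonomy representation $\rho : \pi_1(M) \to \Aff(\R^n)$. I would first assume that $M$ is aspherical: while it is not known that every closed affine manifold is aspherical, this is widely expected (and linked to the Markus and Chern conjectures), and a genuinely non-aspherical case would require separate handling of higher homotopy outside the scope of this outline.

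Within the aspherical case, the argument branches according to the behavior of $\rho$. When $\rho$ is injective and $\rho(\pi_1(M)) \cap \R^n$ is non-trivial, the \textbf{Main Theorem} applies directly. When $\rho$ is not injective, let $K = \ker\rho$, which is normal in $\pi_1(M)$. Since $K$ acts on $\tilde M$ preserving the discrete fibers of the developing map $D : \tilde M \to \R^n$, the first technical step is to exploit this structural constraint to show that $K$ is amenable; I would then verify the cohomological hypothesis of the paper's criterion for the pair $(\pi_1(M), K)$ to conclude $\|M\| = 0$. The amenability of $K$ ought to follow from the fact that $K$ acts freely on discrete sets in a manner compatible with an action of $\pi_1(M)/K$, though making this precise is itself a nontrivial step.

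The hardest remaining sub-case is when $\rho$ is injective with $\rho(\pi_1(M)) \cap \R^n = \{0\}$, so that $\pi_1(M)$ embeds into $\GL_n(\R)$. Here the guiding principle is the Auslander conjecture: whenever Auslander (or an anticipated analogue beyond the complete setting) is available, $\rho(\pi_1(M))$ is virtually polycyclic, hence amenable, and Gromov's vanishing theorem finishes the job. This is the principal obstacle: in dimensions where Auslander remains open, genuinely new input is needed. One potential workaround would be to bypass amenability entirely and instead use the developing map to straighten fundamental cycles into the contractible space $\R^n$, with the goal of controlling the $\ell^1$-norm directly; however, making such a straightening argument succeed when $D$ fails to be a covering appears to require substantially new ideas beyond those developed in the present paper.
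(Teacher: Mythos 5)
The statement you are trying to prove is stated in the paper as an open \emph{conjecture}; the paper proves only the special case recorded as the \textbf{Main Theorem} (holonomy injective with non-trivial translational part), so there is no proof in the paper to compare against. Your proposal is a research program rather than a proof, and each of its three branches contains a genuine gap. First, the reduction to the aspherical case is not valid: closed affine manifolds need not be aspherical (the Hopf manifolds $S^{n-1}\times S^1 \cong (\R^n\setminus\{0\})/\langle x\mapsto 2x\rangle$ are affine and not aspherical for $n\geq 3$), and you explicitly set this case aside. Second, in the non-injective branch, the claim that $K=\ker\rho$ is amenable does not follow from the fact that $K$ acts freely on the (discrete) fibers of the developing map: non-amenable groups, e.g.\ free groups, act freely on discrete sets, so this ``structural constraint'' gives no amenability. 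Moreover, even granting amenability of $K$, the paper's criterion (Lemma \ref{lem:claim1} and Theorem \ref{thm:main}) requires verifying that $q^*:H^n(\Lambda;\R)\to H^n(\Gamma;\R)$ vanishes, which in the paper is extracted from finiteness of $\cdim_\R$ of the quotient via the Alperin--Shalen and Fel'dman theorems; you give no substitute for that step.

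Third, in the injective branch with trivial translational part, you invoke the Auslander Conjecture, which (a) is itself open outside low dimensions, and (b) only concerns \emph{complete} affine manifolds, so it says nothing about a general affine structure whose holonomy happens to land in $\GL_n(\R)$; there is no reason to expect $\pi_1(M)$ to be virtually polycyclic or amenable in that generality. The alternative ``straightening via the developing map'' idea founders exactly where you say it does: when $D$ is not a covering onto its image, there is no control on lifting or filling cycles. In short, the proposal correctly identifies the paper's Main Theorem as covering one sub-case, but the remaining sub-cases are not closed by the arguments sketched, and the statement remains a conjecture.
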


\begin{remark}
In the context of closed complete affine manifolds, the Auslander Conjecture predicts that the fundamental group of such a 
manifold is virtually polycyclic.
Since manifolds whose fundamental group are virtually polycyclic have vanishing simplicial volume, for this class of manifolds our conjecture would follow immediately from the Auslander Conjecture. In particular, from the work of 
Abels-Margulis-Soifer \cite{Abels-Margulis-Soifer}, we see that for {\it complete} affine manifolds,
our conjecture holds in dimensions $\leq 6$.
\end{remark}

\begin{remark}
Another famous problem is the Chern Conjecture, which asserts that affine manifolds have zero Euler characteristic.
But a well-known conjecture of Gromov predicts that aspherical manifolds with vanishing simplicial volume 
automatically have vanishing Euler characteristic. So if Gromov's conjecture is correct, then our conjecture would immediately imply the Chern 
Conjecture. The Chern Conjecture has so far only been established for particular families of affine structures or of manifolds. For example, it is 
known to hold for complete affine manifolds \cite{KoSu75}, for affine manifolds with linear holonomy in $\mathrm{SL}(n,\mathbb{R})$ \cite{Kl15} 
(a conjecture of Markus predicts this is equivalent to being complete), for surfaces \cite{Be60}, for higher rank irreducible locally symmetric 
manifolds \cite{GoHi84}, for manifolds which are locally a product of hyperbolic planes \cite{BuGe11}, and 
for complex hyperbolic surfaces \cite{Pi16}. 
\end{remark}

\vskip 5pt

\centerline{\bf Acknowledgments}

\vskip 5pt
C.C. and J.-F.L. 
would like to thank the SNFS for its support (grant 200020-159581/1) and the University of Geneva for its hospitality during a visit when much of
this work was completed. The authors are grateful to the American Institute of Mathematics for funding a SQuaRE program 
``New Horizons for Simplicial Volume and the Barycenter Method'', where several ideas of the present paper originated. 

The authors would like to thank Jim Davis, Mike Davis, Bill Goldman, 
and Mike Mandell for helpful comments. We are particularly indebted to Clara L\"oh, for many insightful comments, and for
catching a gap in an earlier version of this paper. We would also like to thank Christoforos Neofytidis, for drawing our attention to 
Fel'dman's paper \cite{F71}.

\section{Normal amenable subgroups and a question of L\"uck}\label{luck}

In L\"uck's book on $L^2$-invariants, the following question is raised -- see \cite[Question 14.39]{L02}:

\begin{question*} Let $M$ be a closed aspherical manifold whose fundamental group contains 
a non trivial amenable normal subgroup $A\triangleleft\pi_1(M)$. Does the simplicial volume of $M$ vanish?
\end{question*}

Affirmative answers to this question are only known in some special cases. It is easy to check for fibrations for which the fiber has amenable fundamental group \cite[Exercise 14.15]{L02}. Furthermore, Neofytidis proves it for aspherical manifolds whose fundamental group is infinite index presentable by products 
while the quotient by the center of the fundamental group is not presentable by products \cite[Corollary 1.2]{Ne15}.
While we will not require it for the proof of our {\bf Main Theorem}, we first establish a special case of the question. Both of 
these results will rely on the following elementary lemma.

\begin{lemma}\label{lem:claim1}
Let $M$ be a closed connected $n$-manifold, and $\Ga = \pi_1(M)$. Assume $A\triangleleft \Ga$ is an amenable
normal subgroup, and $q:\Ga \rightarrow \La := \Ga/A$ is the quotient map.
If the induced map $q^*:H^n(\La ; \R)\to H^n(\Ga ; \R)$ is the zero map, then $\norm{M}=0$.
\end{lemma}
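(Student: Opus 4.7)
The plan is to combine Gromov's duality principle with two structural theorems in the theory of bounded cohomology. By Gromov duality, for a closed oriented $n$-manifold, $\norm{M} = 0$ is equivalent to the dual fundamental class $[M]^* \in H^n(M;\R)$ \emph{not} being in the image of the comparison map $c_M \colon H^n_b(M;\R) \to H^n(M;\R)$. I will argue by contradiction: assuming $\norm{M} > 0$, write $[M]^* = c_M(\tilde\omega)$ for some $\tilde\omega \in H^n_b(M;\R)$, and derive a contradiction from the hypothesis on $q^*$.

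The key is to transport $\tilde\omega$ upward through $\Gamma$ and $\La$ using two foundational isomorphisms. First, Gromov's mapping theorem (bounded cohomology is a functor of $\pi_1$ for nice spaces) implies that the classifying map $f \colon M \to B\Ga$ induces an isometric isomorphism $f^*_b \colon H^*_b(\Ga;\R) \xrightarrow{\cong} H^*_b(M;\R)$. Second, the Gromov--Ivanov theorem on amenable normal subgroups gives an isometric isomorphism $q^*_b \colon H^*_b(\La;\R) \xrightarrow{\cong} H^*_b(\Ga;\R)$. Composing, I can lift $\tilde\omega$ to a class $\tilde\omega_\La \in H^n_b(\La;\R)$.

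A naturality diagram chase, using commutativity of the comparison map with all induced maps above, then yields
\[
[M]^* \;=\; c_M(\tilde\omega) \;=\; f^* \circ c_\Ga \circ q^*_b(\tilde\omega_\La) \;=\; f^* \circ q^* \circ c_\La(\tilde\omega_\La),
\]
which vanishes by the hypothesis that $q^* \colon H^n(\La;\R) \to H^n(\Ga;\R)$ is zero. (Note this still works if $H^n(\La;\R) = 0$: then $c_\La(\tilde\omega_\La) = 0$ directly.) This contradicts the fact that $[M]^*$ is a generator of $H^n(M;\R) \cong \R$. If $M$ is non-orientable, one first passes to the orientation double cover $\widetilde{M}$, uses $\norm{M} = \tfrac{1}{2}\norm{\widetilde{M}}$, and checks that the hypothesis $q^* = 0$ transfers appropriately to $\pi_1(\widetilde{M})$.

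The main obstacle is conceptual rather than computational: everything rides on correctly invoking the two structural isomorphisms for bounded cohomology (Gromov's mapping theorem and the amenable-kernel theorem), after which the argument reduces to a short naturality chase. The only real delicate point is the non-orientable case, which is a standard reduction but requires a little care with the transfer.
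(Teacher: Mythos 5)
Your proof is correct and follows essentially the same route as the paper's: both rest on the isometric isomorphisms $q^*_b$ (amenable kernel) and $f^*_b$ (Gromov's mapping theorem) together with naturality of the comparison map, the paper concluding directly that $c_\Ga$ and hence $c_M$ vanish in degree $n$, while you phrase the identical diagram chase contrapositively via the dual fundamental class. The two versions differ only in presentation.
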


\begin{proof}
We have the 
following commutative diagram:
\begin{equation*}
  \label{eq:cd1}
  \raisebox{-0.5\height}{\includegraphics[scale=1.1]{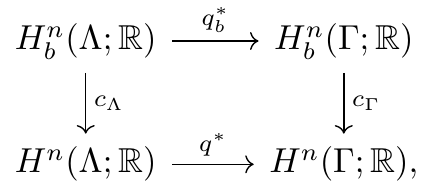}}
\end{equation*}
where the vertical arrows are the comparison maps from bounded cohomology to ordinary cohomology, while the
horizontal arrows are the morphisms induced by the surjection $q: \Ga \twoheadrightarrow \La$. 
From the hypothesis, the bottom map $q^*$ is the zero map. On the other hand, 
since $A$ is amenable, we have that the top map $q^*_b$ is an isomorphism 
(see \cite[Section 3.1]{Gr82}, or take $E=\R$ in \cite[Remark 8.5.4]{Monod}). Commutativity of the diagram
forces $c_\Ga$ to also be the zero map. 

Next, consider the classifying map $\phi:M\rightarrow B\Ga$. The induced map on bounded cohomology is always
an isomorphism, so we get a commutative diagram:
\begin{equation*}
  \label{eq:cd2}
  \raisebox{-0.5\height}{\includegraphics[scale=1.1]{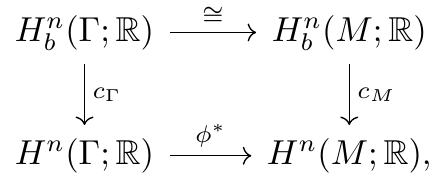}}
\end{equation*}
Since $c_\Ga$ is the zero map, so is $c_M$, which immediately implies $||M||=0$ by the ``duality'' of $\ell^1$ and $\ell^\infty$-norms \cite{Gr82}.
\end{proof}

\begin{theorem}\label{thm:main} Let $M$ be a closed connected aspherical $n$-manifold whose fundamental group 
$\Ga=\pi_1(M)$ contains a non-trivial amenable normal subgroup $A\triangleleft\pi_1(M)$, and let $\Lambda=\pi_1(M)/A$. 
Assume that the quotient group $\La$ has {\em finite} cohomological dimension $\cdim _{\mathbb R}(\Lambda) = \ell <\infty$,
and that $H^\ell(\Lambda ; H^k(A ; \mathbb{R}))\neq 0$, where $k=\cdim _{\mathbb R}(A)$.  Then $||M|| = 0$.
\end{theorem}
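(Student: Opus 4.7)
The plan is to verify the hypothesis of Lemma \ref{lem:claim1}, namely that $q^*\colon H^n(\Lambda;\mathbb{R})\to H^n(\Gamma;\mathbb{R})$ is the zero map. In fact I will show the stronger statement that $H^n(\Lambda;\mathbb{R})=0$, which makes $q^*$ trivially zero.

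The main tool is the Hochschild--Serre spectral sequence associated to the short exact sequence $1\to A\to\Gamma\to\Lambda\to 1$ with real coefficients:
$$E_2^{p,q}=H^p(\Lambda;H^q(A;\mathbb{R}))\ \Longrightarrow\ H^{p+q}(\Gamma;\mathbb{R}).$$
The key point is a corner argument: because $\ell=\cdim_{\mathbb{R}}(\Lambda)$ and $k=\cdim_{\mathbb{R}}(A)$ are extremal, for every $r\ge 2$ the outgoing differential $d_r\colon E_r^{\ell,k}\to E_r^{\ell+r,k-r+1}$ lands in a term with $p$-coordinate strictly greater than $\ell$ (so zero at the $E_2$-level since $H^p(\Lambda;-)=0$ for $p>\ell$), while every incoming differential $d_r\colon E_r^{\ell-r,k+r-1}\to E_r^{\ell,k}$ originates from a term with $q$-coordinate strictly greater than $k$ (where $H^q(A;\mathbb{R})=0$). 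Hence $E_\infty^{\ell,k}=E_2^{\ell,k}=H^\ell(\Lambda;H^k(A;\mathbb{R}))$, which is non-zero by hypothesis.

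Since $E_\infty^{\ell,k}$ is a subquotient of $H^{\ell+k}(\Gamma;\mathbb{R})$, I conclude that $H^{\ell+k}(\Gamma;\mathbb{R})\ne 0$, and therefore $\cdim_{\mathbb{R}}(\Gamma)\ge\ell+k$. On the other hand, $M$ is a closed aspherical $n$-manifold and hence a $K(\Gamma,1)$ of geometric dimension $n$, so $\cdim_{\mathbb{R}}(\Gamma)\le n$. Combining these gives $\ell+k\le n$. Now $\Gamma$ is torsion-free (as the fundamental group of a closed aspherical manifold), so the non-trivial subgroup $A$ is also torsion-free and in particular contains a copy of $\mathbb{Z}$; by Shapiro's lemma, $k=\cdim_{\mathbb{R}}(A)\ge\cdim_{\mathbb{R}}(\mathbb{Z})=1$. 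Thus $\ell\le n-1$, which forces $H^n(\Lambda;\mathbb{R})=0$, and Lemma \ref{lem:claim1} yields $\|M\|=0$.

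The argument is essentially a clean spectral-sequence computation, so there is no significant obstacle: the real work was carried out already in Lemma \ref{lem:claim1}. The one step that requires a moment's thought is the corner observation showing that the $(\ell,k)$-entry is a permanent cycle; once this is in place, propagating the non-vanishing hypothesis to the inequality $\ell+k\le n$, and then to $\ell<n$, is automatic.
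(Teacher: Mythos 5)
Your proposal is correct and follows essentially the same route as the paper: the same Lyndon--Hochschild--Serre corner argument showing $E_2^{\ell,k}$ survives to $E_\infty$, giving $\ell+k\le n$, hence $\ell<n$ and $H^n(\Lambda;\mathbb{R})=0$, followed by Lemma \ref{lem:claim1}. Your explicit justification that $k\ge 1$ (via torsion-freeness of $A$ and monotonicity of cohomological dimension) is a small point the paper leaves implicit, but otherwise the two arguments coincide.
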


\begin{proof} 
We consider the cohomological Lyndon-Hochschild-Serre spectral sequence with real coefficients $\R$ (and
trivial module structure) associated to the short exact sequence $1\rightarrow A \rightarrow \Ga \rightarrow \La\rightarrow 1$.  
The $E_2$-page is given by
\[
E_2^{p,q}:=H^p(\La ; H^q( A ; \R)) \Rightarrow H^{p+q}(\Ga ; \R),
\]
and the spectral sequence convergences to the cohomology $H^*(\Ga ; \R)\cong H^*(M ; \R)$. 
Note that $\tilde M/A$ is an $n$-dimensional model for a $K(A,1)$, and hence $\cdim_{\R}(A) = k \leq n$ is finite. 
From our hypotheses, we obtain the following observations:
\begin{enumerate}
\item Since $\cdim_{\R}(A)=k$, it follows that $H^q(A ; \R)=0$ for all $q>k$. This
forces $E_2^{p,q}=0$ for all $q>k$.
\item Since $\cdim_{\R}(\La)=\ell$, we see that $H^p(\La ; -)=0$ for all $p>\ell$, regardless of the coefficient 
$\mathbb R[\Lambda]$-module. 
In particular, this forces $E_2^{p,q}=0$ for all $p>\ell$.
\end{enumerate}
Whence we see that the $E^2$-page looks like
\begin{equation*}
  \label{eq:cd3}
  \raisebox{-0.5\height}{\includegraphics[scale=1.1]{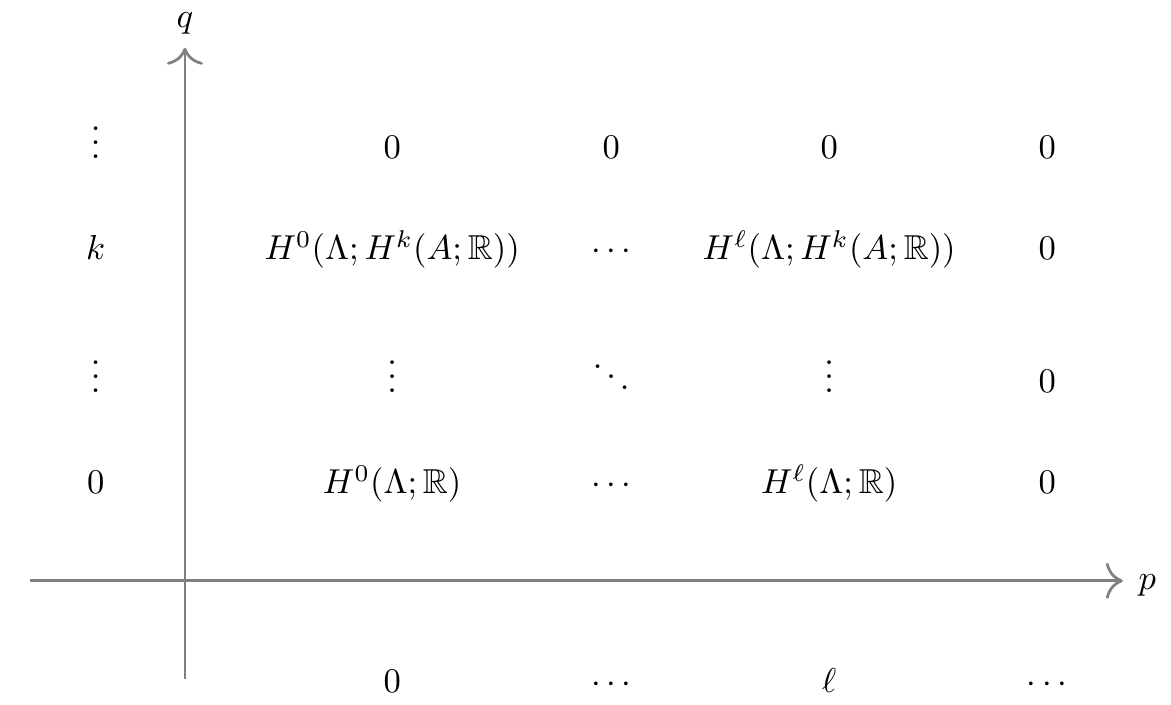}}
\end{equation*}

By hypothesis, we also have that the $E_2^{\ell ,k}$ entry is non-zero. Thus the $E_2^{\ell ,k}$
entry survives to the $E_\infty$-page, establishing that $\cdim_{\mathbb R}(\Ga) \geq \ell +k$.

Now the closed orientable aspherical manifold $M^n$ is a model for $K(\Ga, 1)$, so we obtain the lower bound 
$n\geq \ell + k$. Since $A$ is non-trivial, we have that $k>0$, and hence that $n>\ell=\cdim_{\mathbb R}(\La)$.
This forces $H^n(\La ; \R)=0$, and Lemma \ref{lem:claim1} allows us to conclude $\norm{M}=0$.
\end{proof}

\begin{remark}
As was pointed out to us by C. L\"oh, the proof of Theorem \ref{thm:main} still works if instead of
$M$ aspherical, we only have $\cdim _\R(\pi_1(M))=n$.
\end{remark}

\begin{remark} It is tempting to use Lemma \ref{lem:claim1} to attack the general case of L\"uck's question. Notice
that the induced homomorphism $q^*: H^n(\La; \R) \rightarrow H^n(\Ga ; \R)$ appears naturally inside
the Lyndon-Hochschild-Serre spectral sequence. Indeed, $H^n(\La; \R)$ appears as the $E_2^{0,n}$-term in the spectral
sequence. Thus, whether or not the induced homomorphism $q^*$ is zero translates to whether or not the $E_2^{0,n}$ survives
to the $E_\infty$-page, i.e. whether or not $E_\infty^{0,n}=0$.  In the proof of Theorem \ref{thm:main}, our hypotheses already 
forced $E_2^{0,n} =0$.
\end{remark}


\section{Proof of Main Theorem}\label{proof-main-thm}

This section is devoted to the proof of the {\bf Main Theorem}. So let us assume that $M$ is a connected closed aspherical affine manifold, 
$\Ga = \pi_1(M)$, and the holonomy representation $\rho: \Ga \rightarrow \Aff(\R^n) = \R^n \rtimes GL_n(\R)$ is injective with 
$\rho(\Gamma)\cap \mathbb R^n$ non-zero. Since the simplicial volume is multiplicative under finite covers, it is sufficient to show that a finite 
cover of $M$ has vanishing 
simplicial volume. Since the hypotheses in our theorem are inherited by finite covers, we will from now on assume that the manifold
$M$ is orientable.

We have the following commutative diagram relating the various groups
we are interested in:
\begin{equation}\label{eq:Aff}
  \raisebox{-0.5\height}{\includegraphics[scale=1.1]{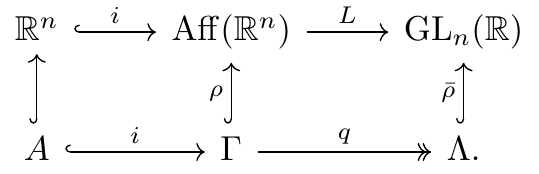}}
\end{equation}
Here $A$ is the purely translational part of $\pi_1(M)$ -- and by hypothesis, $A$ is non-trivial, so of
rank $\geq 1$. This forces $\cdim_{\mathbb R}(A)\geq 1$. 
Since $A\triangleleft \pi_1(M)$ and $M$ is aspherical, $\pi_1(M)$ and $A$ are torsion free. 
Note that $\tilde M /A$ is an $n$-dimensional model for a $K(A,1)$, which immediately gives us:

\vskip 5pt

\noindent {\bf Fact 1:} $\cdim_{\R}(A)=k$, with $1\leq k \leq n$, and hence $A\cong \Z ^k$.

\vskip 5pt

Next we consider $\cdim_{\R}(\La)$, where $\La$ is the linear part of the holonomy action. A special case of the main theorem 
of \cite{Alperin-Shalen} states:

\vskip 10pt

\begin{theorem*}[Alperin-Shalen]
If $S$ is a finitely generated integral domain of characteristic zero, then $G<\GL(S)$ has finite cohomological dimension
$\cdim_{\Z} (G)<\infty$ if and 
only if there is an upper bound on the ranks of abelian subgroups of $G$.
\end{theorem*}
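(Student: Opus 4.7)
The plan is to prove the two directions separately. The forward direction (finite cohomological dimension implies bounded abelian ranks) is essentially formal: for any free abelian subgroup $\Z^r \leq G$ one has $r = \cdim_{\Z}(\Z^r) \leq \cdim_{\Z}(G)$, since subgroups of a group of finite cohomological dimension inherit an upper bound on cohomological dimension. A general abelian subgroup $A$ of a characteristic-zero linear group has rank realized on a finitely generated subgroup (its torsion part is finite, and its torsion-free quotient embeds in a finite-dimensional $\Q$-vector space via the fraction field of $S$), so the rank of $A$ is at most $\cdim_{\Z}(G)$.

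The reverse direction is the deep content. First, I would use Selberg's lemma to replace $G$ by a torsion-free subgroup of finite index, which has the same finiteness of $\cdim$. Second, embed $G$ as a discrete subgroup of a product $\prod_v \GL_n(K_v)$ of linear groups over completions of the fraction field $K$ of $S$, for a carefully chosen finite set of places $v$, both archimedean and non-archimedean, rich enough to recognize all elements of $S$. Third, let $X = \prod_v X_v$, where $X_v$ is the symmetric space for archimedean $v$ and the Bruhat-Tits building for non-archimedean $v$. The diagonal $G$-action on $X$ is proper, and $X$ is a finite-dimensional contractible complex. Fourth, via a Borel-Serre-type thickening, pass to a cocompact $G$-invariant subcomplex $Y \subset X$ so that $G\backslash Y$ is a finite CW complex and each cell stabilizer is virtually polycyclic, arising as the intersection of $G$ with a unipotent radical of a parabolic subgroup of one of the factors.

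The main obstacle, and the technical core of the argument, is translating the abelian-rank hypothesis into a uniform bound on the cohomological dimensions of the cell stabilizers of Step 4. Polycyclic groups have cohomological dimension equal to Hirsch length, and the Hirsch length of a torsion-free nilpotent linear group is controlled by the ranks of its abelian subquotients; so one needs the rank bound in $G$ to descend through Borel's structure theory for solvable linear groups into a Hirsch-length bound in each cell stabilizer. Once such a uniform bound is in hand, the equivariant spectral sequence for the $G$-action on $Y$ immediately yields $\cdim_{\Z}(G) < \infty$. A separate subtlety is the choice of places in Step 2: since $S$ need not itself be a ring of $S$-integers, one must combine a specialization argument (embedding $S$ into a finitely generated $\Z$-algebra) with a careful choice of completions so that the resulting product embedding lands in a product of reductive groups over local fields with discrete image. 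This arithmetic-to-geometric reduction and the subsequent uniform control of stabilizers together constitute the heart of the Alperin-Shalen theorem.
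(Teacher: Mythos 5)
First, a point of comparison: the paper does not prove this statement at all --- it is quoted as an external result from Alperin--Shalen [AlSh81] and used as a black box, so there is no internal proof to measure your attempt against. Judged on its own terms, your forward direction is correct and is indeed the formal half: $\Z^r\leq G$ forces $r=\cdim_{\Z}(\Z^r)\leq\cdim_{\Z}(G)$ by monotonicity of cohomological dimension under passage to subgroups, and the rank of an arbitrary abelian subgroup is realized on finitely generated (hence, in a torsion-free group, free abelian) subgroups. Note, however, that the statement as quoted is literally false without a torsion-freeness proviso: $G=\Z/2\leq\GL_1(\Z)$ has abelian subgroups of bounded rank but $\cdim_{\Z}(G)=\infty$. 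The actual Alperin--Shalen theorem concerns \emph{virtual} cohomological dimension (and the paper silently compensates by passing to a torsion-free finite-index subgroup of $\La$ before invoking it); your appeal to Selberg's lemma proves finiteness of vcd, not of cd, so you are implicitly proving the corrected statement --- you should say so.

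The reverse direction contains genuine gaps, which you flag but do not fill. Concretely: (i) a finitely generated integral domain $S$ of characteristic zero may have positive transcendence degree over $\Q$, and then $G$ cannot be made discrete in any finite product $\prod_v\GL_n(K_v)$ of completions of a global field; a specialization of $S$ into a ring of algebraic integers is in general not injective on $G$, so it cannot transport a cohomological dimension bound back to $G$. The workable route drops properness entirely and instead uses Serre's theorem --- if $G$ acts on a contractible finite-dimensional complex with cell stabilizers of uniformly bounded finite cohomological dimension, then $\cdim_{\Z}(G)<\infty$ --- applied to highly non-proper actions on Bruhat--Tits buildings attached to discrete valuations of $\mathrm{Frac}(S)$, with an induction that shrinks the coefficient ring and uses the abelian-rank bound to control the stabilizers at the base of the induction. (ii) Your description of the stabilizers is wrong: cell stabilizers in a Bruhat--Tits building are parahoric (bounded) subgroups, not unipotent radicals of parabolics --- the latter stabilize points at infinity --- and there is no reason for their intersections with $G$ to be virtually polycyclic, nor for a cocompact Borel--Serre-type core to exist for a non-arithmetic $G$. (iii) The step you yourself identify as ``the technical core'' --- converting the bound on abelian ranks into a uniform bound on the cohomological dimensions of the stabilizers --- is precisely the content of the theorem and is not supplied. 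As it stands the proposal is a roadmap with the destination marked, not a proof.
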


\vskip 10pt

For a finite generating set $\{g_1, \ldots, g_r\}\subset \La$, take $S\subset \R$ to be the subring of $\R$ generated (over $\Z$) by the finite
collection of matrix entries of $\{\bar \rho(g_1), \ldots , \bar \rho(g_r)\}$. Then $S$ is a (finitely generated) characteristic zero integral domain, 
since it is a subring of $\R$, and $\bar \rho(\La) \subset \GL(S) \subset \GL_n(\R)$. We now use the embedding $\bar \rho$ to identify $\La$
with its isomorphic copy in $\GL(S)$. Since $\La$ is a finitely generated linear group, it has a finite index torsion-free subgroup $\La ^\prime$; 
we replace $\La, \Ga$ by the finite index subgroups $\La ^\prime, \Ga^\prime:=q^{-1}(\La ^\prime)$. This replaces $M$ by a finite cover $M^\prime$,
so we can now also assume that the quotient group $\La$ is torsion-free.

Taking a finitely generated abelian subgroup $H<\La$ (necessarily torsion-free), we have a corresponding exact sequence:
\begin{equation}\label{eq:H}
  \raisebox{-0.5\height}{\includegraphics[scale=1.1]{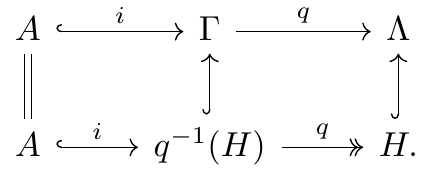}}
\end{equation}
Since $A$, $H$ are finitely generated torsion-free abelian, we see that $\hat H:= q^{-1}(H)$ is a 
finitely generated nilpotent group. Also, $\til{M}/\hat H$ is an $n$-dimensional $K(\hat H,1)$.  
Hence $h(\hat H)=\cdim_{\Z}(\hat H)\leq n$, where $h(\hat H)$ is the Hirsch length of $\hat H$ (see Gruenberg \cite[Section 8.8]{G70}). 
But from the two step 
nilpotence sequence above, the Hirsch length of $\hat H$ is just $k+r$ where $r$ is the rank of $H$. 
Hence $k+ r\leq n$, and $n-k$ is the desired 
upper bound on the rank of the abelian subgroups of $\La$. Applying Alperin and Shalen's result, we conclude that $\La$ 
has finite cohomological dimension $\cdim_{\Z}(\La) <\infty$. Since any finite length free $\Z [\La]$ resolution
of $\Z$ can be tensored with $\R$ to obtain a same length free $\R [\La]$ resolution of $\R$, this yields
$\cdim_{\R}(\La) \leq \cdim_{\Z}(\La)$, which establishes 

\vskip 5pt

\noindent {\bf Fact 2:} $\cdim_{\R}(\Lambda)=\ell$ for some finite $\ell$.

\vskip 5pt

Finally, we will use the following result of Fel'dman \cite[Theorem 2.4]{F71}

\vskip 10pt

\begin{theorem*}[Fel'dman]
For $G$ a group, and $H\triangleleft G$ a normal subgroup, $F$ a field. If $H$ is of type $FP$ (over the field $F$), and 
$\cdim_F(G/H)<\infty$, then 
$$\cdim _F (G)=\cdim _F(H) + \cdim _F(G/H).$$
\end{theorem*}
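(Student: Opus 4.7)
The plan is to establish $\cdim_F G = \cdim_F H + \cdim_F(G/H)$ by proving the two opposing inequalities, both via the Lyndon--Hochschild--Serre spectral sequence
\[
E_2^{p,q} = H^p(Q; H^q(H; M)) \Longrightarrow H^{p+q}(G; M)
\]
associated to $1 \to H \to G \to Q \to 1$ (with $Q = G/H$), applied to carefully chosen $F[G]$-modules $M$. Throughout, write $k = \cdim_F H$ and $\ell = \cdim_F Q$.

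The upper bound $\cdim_F G \leq k + \ell$ is formal: since $H^q(H; -)$ vanishes for $q > k$ and $H^p(Q; -)$ vanishes for $p > \ell$ on every coefficient module, the $E_2$-page is supported in the rectangle $[0,\ell] \times [0,k]$, and hence $H^n(G; M) = 0$ for all $n > k + \ell$ and every $F[G]$-module $M$.

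For the reverse inequality the type $FP$ hypothesis on $H$ is essential. Using a finite-length resolution of $F$ by finitely generated projective $F[H]$-modules, I will identify the top cohomology functor as a tensor product $H^k(H; -) \cong D \otimes_{F[H]} (-)$, where the ``dualizing module'' $D := H^k(H; F[H])$ is a non-zero right $F[H]$-module; in particular $H^k(H; -)$ is right exact and commutes with direct sums. Normality of $H$ in $G$, together with the fact that inner automorphisms act trivially on group cohomology, promotes $D$ to an $F[Q]$-module, making the above identification one of $F[Q]$-valued functors on $F[G]$-modules.

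The plan is then to choose an $F[Q]$-module $N$ with $H^\ell(Q; N) \neq 0$ (which exists since $\cdim_F Q = \ell$) and to construct an $F[G]$-module $M$ realizing $N$ as a direct summand or a quotient of $H^k(H; M)$ in the category of $F[Q]$-modules --- the natural candidate is an induction-type construction built from $N$ and the dualizing data, leveraging tensor--Hom adjunction and the right exactness of $H^k(H; -)$. With such $M$ in hand, the entry $E_2^{\ell, k} = H^\ell(Q; H^k(H; M))$ is non-zero, and it sits at the extreme corner of the supporting rectangle, so no differential on any page can touch it: it survives as a permanent cycle, forcing $E_\infty^{\ell, k} \neq 0$ and thus $H^{k+\ell}(G; M) \neq 0$. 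The main obstacle is exactly this lifting --- producing an $F[G]$-module whose top $H$-cohomology realizes a prescribed $F[Q]$-module $N$ --- which cannot be carried out in general without the type $FP$ hypothesis, since otherwise $H^k(H; -)$ need be neither right exact nor compatible with the direct sums required to assemble $M$ from $N$.
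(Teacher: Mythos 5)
The paper does not prove this statement at all: it is quoted verbatim from Fel'dman \cite{F71} (Theorem 2.4) and used as a black box, so there is no internal proof to compare against. Judged on its own terms, your proposal has the right architecture --- it is essentially the skeleton of the standard proof (as in Bieri's notes on homological dimension of discrete groups): the upper bound via the support rectangle of the spectral sequence is complete and correct, and the identification $H^k(H;-)\cong D\otimes_{F[H]}(-)$ with $D=H^k(H;F[H])$, together with the corner argument showing $E_2^{\ell,k}$ is a permanent cycle, is exactly the right use of the type $FP$ hypothesis.

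However, the proposal stops precisely where the real work begins, and you acknowledge this yourself (``the main obstacle is exactly this lifting''). There are two concrete gaps. First, you assert that $D$ is non-zero without argument; this is the non-trivial fact that for a group of type $FP$ the cohomological dimension is detected by group-ring coefficients, i.e. $\cdim_F H=\max\{q: H^q(H;F[H])\neq 0\}$, and it genuinely requires the $FP$ hypothesis. Second, the ``induction-type construction'' of $M$ is never specified, and no elaborate construction is needed: the module that works is simply a free $F[G]$-module. For $M=F[G]^{(I)}$, type $FP$ gives $H^k(H;M)\cong\bigl(D\otimes_{F[H]}F[G]\bigr)^{(I)}$, and the standard untwisting isomorphism $D\otimes_F F[Q]\cong D_0\otimes_F F[Q]$ (where $D_0$ denotes the underlying $F$-vector space with trivial $Q$-action) shows this is a \emph{free} $F[Q]$-module of rank $|I|\cdot\dim_F D\geq |I|$. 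Since $H^\ell(Q;-)$ is right exact ($\ell$ being the top dimension, with no $FP$ hypothesis needed on $Q$), choosing $I$ large enough that $F[Q]^{(I)}$ surjects onto some $N$ with $H^\ell(Q;N)\neq 0$ forces $H^\ell(Q;H^k(H;M))\neq 0$, and your corner argument then yields $H^{k+\ell}(G;M)\neq 0$. A small correction along the way: right exactness of $H^k(H;-)$ also holds without type $FP$ (it follows from the vanishing of $H^{k+1}(H;-)$); what fails without $FP$ is commutation with direct sums and the tensor-product description, and those are the properties the argument actually needs.
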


\vskip 10pt

From {\bf Fact 1} and {\bf Fact 2} we see that the hypotheses of Fel'dman's theorem hold for $A\triangleleft \Gamma$
(with $F=\R$).
Since $\cdim_\R (A)>0$, Fel'dman's theorem gives us the inequality $\cdim _\R(\La)< \cdim _\R(\Ga)=n$. 
Applying Lemma \ref{lem:claim1} concludes the proof of the {\bf Main Theorem}.

\begin{remark}
As the reader can easily see, this same proof applies to the following more general setting. Let $M$ be closed connected $n$-manifold,
and assume that $\Ga = \pi_1(M)$ has $\cdim_\R (\Ga) = n$. If $A \triangleleft \Ga$ is a normal elementary amenable subgroup of type
$FP$, and 
$\La =\Ga / A$ is a linear group, then $||M||=0$. For the portions of the proof relying on Hirsch length, one can use Hillman's extension
of the Hirsch length to elementary amenable groups, see \cite[Theorem 1]{H91}. Note also that elementary amenable groups of type
$FP$ are automatically virtually solvable, see \cite{KMPN09}.
\end{remark}

\section{Concluding remarks.}\label{complete-case}

As mentioned in the introduction, the case of closed {\it complete} affine manifolds provides a large class of 
manifolds satisfying the hypotheses of our {\bf Main Theorem}. For these manifolds, it is tempting to try and give a more direct, geometrical
proof that $||M||=0$. Indeed, one can consider the foliation of $\R^n$ given by affine subspaces in the directions spanned by the
(non-trivial) translational subgroup. Since the developing map is a homeomorphism $D:\widetilde M \rightarrow \R^n$, the translational 
subgroup acts discretely (hence cocompactly) on the leaves of this foliation. Normality of the translational subgroup implies that this 
foliation of $\widetilde M \cong \R^n$ descends to a foliation of $M$ by closed submanifolds, where each leaf is finitely covered by a torus. 
If this foliation was a fibration, then it would follow that $||M||=0$ \cite[Exercise 
14.15]{L02}. More generally, $||M||=0$ if $M$ admits a polarized $\mc{F}$-structure (see 
\cite{CG86}). This geometric approach then motivates the following interesting

\vskip 10pt

\noindent {\bf Question:} If $M$ is a closed aspherical manifold, with a foliation all of whose leaves are finitely covered by tori. Does
it follow that $||M||=0$?

\vskip 10pt

In the non-complete case, one can still foliate the image of the development map $D$ by affine subspaces in the directions spanned by
the translational subgroup, and then pull back this foliation via $D$ to a foliation on $\widetilde M$. The foliation on $\widetilde M$ will still descend
to a foliation on $M$, but it is unclear whether the leaves of the resulting foliation on $M$ are even {\it closed}. Indeed, if one takes a leaf of the 
foliation in $\R^n$, its pre-image in $\widetilde M$ could consist of countably infinitely many leaves for the induced foliation of $\widetilde M$. 
The pre-image of the translational subgroup could then act by permuting
these individual leaves in $\widetilde M$, none of which would close up in $M$.


\end{document}